\documentclass[12pt,reqno]{article}

\usepackage[usenames]{color}
\usepackage{amssymb}
\usepackage{graphicx}
\usepackage{amscd}

\usepackage[colorlinks=false]{hyperref}

\definecolor{webgreen}{rgb}{0,.5,0}
\definecolor{webbrown}{rgb}{.6,0,0}

\usepackage{color}
\usepackage{fullpage}
\usepackage{float}

\usepackage{graphics,amsmath,amssymb}
\usepackage{amsthm}
\usepackage{amsfonts}
\usepackage{latexsym}
\usepackage{epsf}

\setlength{\textwidth}{6.5in}
\setlength{\oddsidemargin}{.1in}
\setlength{\evensidemargin}{.1in}
\setlength{\topmargin}{-.1in}
\setlength{\textheight}{8.4in}

\newcommand{\seqnum}[1]{\href{http://oeis.org/#1}{\underline{#1}}}

\begin{document}

\begin{center}
\vskip 0.5cm
\end{center}

\theoremstyle{plain}
\newtheorem{theorem}{Theorem}
\newtheorem{corollary}[theorem]{Corollary}
\newtheorem{lemma}[theorem]{Lemma}
\newtheorem{proposition}[theorem]{Proposition}

\theoremstyle{definition}
\newtheorem{definition}[theorem]{Definition}
\newtheorem{example}[theorem]{Example}
\newtheorem{conjecture}[theorem]{Conjecture}

\theoremstyle{remark}
\newtheorem{remark}[theorem]{Remark}

\begin{center}
\vskip 1cm{\LARGE\bf 
On the geometric mean of the first {\em n} primes 
}
\vskip 0.7cm
\large
Alexei Kourbatov\\
www.JavaScripter.net/math\\
{\tt akourbatov@gmail.com}
\end{center}

\vskip .2 in

\begin{abstract}\noindent
Let $p_n$ be the $n$th prime, and consider the sequence $s_n = (2\cdot3\cdots p_n)^{1/n} = (p_n\#)^{1/n}$,
the geometric mean of the first $n$ primes. We give a short proof that $p_n/s_n \to e$, 
a result conjectured by Vrba (2010) and proved by S\'andor \& Verroken~(2011).
We~show that $p_n/s_n = \exp(1+1/\log p_n + O(1/\log^2 p_n))$ as $n\to\infty$, 
and give explicit lower and upper bounds for the $O(1/\log^2 p_n)$ term.
\end{abstract}

\section{Introduction}
In 2001 A.~Murthy posted OEIS sequence \seqnum{A062049}: 
the integer part of the geometric mean of the first $n$ primes \cite{oeis}.
The sequence is  non-decreasing, unbounded, and begins as follows:
$$
2, 2, 3, 3, 4, 5, 6, 7, 8, 9, 10, 11, 13, 14, 15, 16, 17, 19, 20, 21, 23\ldots
$$
Let $p_n$ be the $n$th prime, and let $s_n$ denote the geometric mean of the first $n$ primes,
$$
s_n = (2\cdot3\cdots p_n)^{1/n} = (p_n\#)^{1/n}, \qquad\mbox{ where }\quad 
p_n\# = 2\cdot3\cdots p_n = \prod_{k=1}^n p_k,
$$
then $\seqnum{A062049}(n) = \lfloor s_n \rfloor$.
(The product $p_n\#$ is called the primorial of $p_n$; see \seqnum{A002110}.)

For many years, sequence \seqnum{A062049} has been lacking an asymptotic formula;
nor did it have any lower or upper bounds for the sequence terms.
In 2010 A.\,Vrba conjectured \cite{rivera} that
$$
p_n/s_n \to e \quad\mbox{ as } n\to\infty.
$$
This was proved in 2011 by S\'andor and Verroken \cite{sandorverroken},
and revisited in 2013 by Hassani \cite{hassani}.

In Section \ref{sec2} we give a new short proof that $p_n/s_n \to e$ and, moreover, we show that
$$
p_n/s_n = \exp(1+1/\log p_n + O(1/\log^2 p_n)).
$$
We give explicit lower and upper bounds for the $O(1/\log^2 p_n)$ term.

\pagebreak
\section{The main result}\label{sec2}

Let $\pi(x)$ denote the prime counting function and $\theta(x)$ denote
Chebyshev's $\theta$ function:
\begin{eqnarray*}
   \pi(x) &=& \sum_{p\le x\atop p \mbox{\tiny\ prime}} 1 ; \\
\theta(x) &=& \sum_{p\le x\atop p \mbox{\tiny\ prime}} \log p
          ~=~ \log \prod_{p\le x\atop p \mbox{\tiny\ prime}}p.
\end{eqnarray*}
Clearly $\pi(p_n)=n$ and $\theta(p_n)=\log(p_n\#)$, so 
$\log s_n = \log(p_n\#)/n = \theta(p_n)/\pi(p_n)$. 

\begin{lemma}\label{lemma1} For $x\ge10^8$ we have
$$
{|\theta(x)-x|\over\pi(x)} ~<~ {1\over\log^2 x}.
$$
\end{lemma}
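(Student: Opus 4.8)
The plan is to bound the two pieces $|\theta(x)-x|$ and $\pi(x)$ separately, using known explicit estimates, and then combine them. The quantity $|\theta(x)-x|$ measures the error in the prime number theorem for $\theta$, so I would invoke an explicit form of that theorem; the sharp, now-standard source is the work of Dusart, which gives bounds of the shape $|\theta(x)-x| < c\,x/\log^k x$ valid for all $x$ above an explicit threshold. The factor $\pi(x)$ in the denominator should be bounded below, and here the convenient explicit estimate is $\pi(x) > x/\log x$ (valid for $x \ge 17$, again due to Chebyshev-type or Dusart-type bounds). Combining a numerator bound $|\theta(x)-x| < c\,x/\log^3 x$ with $\pi(x) > x/\log x$ would give $|\theta(x)-x|/\pi(x) < c/\log^2 x$, and the whole game is to arrange the constants so that $c \le 1$ for $x \ge 10^8$.

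Concretely, the first step is to select explicit inequalities from the literature with thresholds at or below $10^8$. For the numerator I would look for a bound of the form $|\theta(x)-x| \le \eta\, x/\log^3 x$ (so that after dividing by $\pi(x) > x/\log x$ one log power remains in the denominator squared), or alternatively a cruder bound $|\theta(x)-x| \le \eta\, x/\log^2 x$ combined with a stronger lower bound on $\pi(x)$ of the form $\pi(x) > x/\log x \cdot (1 + 1/\log x + \cdots)$. Either route reduces the lemma to verifying that the resulting explicit constant is below $1$. The second step is to feed in the matching lower bound for $\pi(x)$; Dusart's inequalities give $\pi(x) \ge \frac{x}{\log x}\bigl(1 + \frac{1}{\log x}\bigr)$ for $x$ past a modest threshold, which more than suffices.

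The third step is the arithmetic of assembling these two inequalities into $|\theta(x)-x|/\pi(x) < 1/\log^2 x$ and checking that the constant works at the boundary $x = 10^8$. Since all the constituent bounds are monotone-friendly for large $x$ (the error terms decay faster than the claimed bound), it is enough to verify the inequality at $x = 10^8$ and note that the margin only improves as $x$ grows; if a clean monotonicity argument is awkward, one can instead choose literature constants with enough slack that the inequality holds uniformly for all $x \ge 10^8$ by inspection of the functional form.

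The main obstacle I anticipate is bookkeeping rather than conceptual: the thresholds and constants in the various explicit PNT bounds do not always align with $10^8$, so I would need to pick a compatible pair of inequalities whose validity ranges both contain $[10^8,\infty)$ and whose combined constant lands below $1$. If the off-the-shelf constants are slightly too large at $x=10^8$, the fallback is to use a sharper (higher log-power) explicit bound on $|\theta(x)-x|$—again available from Dusart—so that the claimed $1/\log^2 x$ has substantial room to spare, at the cost of a more careful citation. In short, the difficulty is matching explicit constants to the stated threshold $10^8$, not the analytic idea, which is simply PNT-with-error divided by the Chebyshev lower bound on $\pi(x)$.
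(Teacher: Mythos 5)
Your plan is essentially the paper's proof: the paper likewise combines Dusart's explicit bound $|\theta(x)-x| < x/\log^3 x$ (valid for $x\ge 89967803$, so in particular on $[10^8,\infty)$) with a Dusart lower bound on $\pi(x)$, and multiplies the two. The only cosmetic difference is that the paper uses the slightly stronger $\pi(x) > x/(\log x - 1)$ for $x\ge 5393$, so that the product $(\log x - 1)/\log^3 x < 1/\log^2 x$ holds with room to spare and no boundary check at $x=10^8$ is needed; your $\pi(x) > x/\log x$ would also suffice since the numerator bound has constant $1$ and is strict.
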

\begin{proof}
Let $x\ge10^8$. From Dusart \cite{dusart} we have the inequalities 
\begin{eqnarray*}
|\theta(x)-x| &<& {x \over \log^3 x} \ \qquad\mbox{ for } x\ge89967803   \quad\mbox{\rm \cite[Theorem 5.2]{dusart}}, \\
       \pi(x) &>& {x \over \log x-1}    \quad\mbox{ for } x\ge5393 \quad\qquad\mbox{\rm \cite[Theorem 6.9]{dusart}}. 
\end{eqnarray*}
Combining the above inequalities we get
$$
{|\theta(x)-x|\over \pi(x)}
 ~<~ {x \over \log^3 x} \cdot {\log x-1 \over x}
 ~<~ {1 \over \log^2 x}  
$$
for all $x\ge10^8$, as desired.
\end{proof}

\begin{theorem}\label{th2} If $s_n = (p_n\#)^{1/n}$, then $p_n/s_n \to e$ as $n\to\infty$, and for $p_n\ge32059$ we have
\begin{equation}\label{eq1}
\exp\left(1+{1\over\log p_n} + {1.62\over\log^2 p_n}\right) ~<~ p_n/s_n ~<~ 
\exp\left(1+{1\over\log p_n} + {4.83\over\log^2 p_n}\right). 
\end{equation}
\end{theorem}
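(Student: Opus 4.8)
The plan is to build on the identity $\log s_n=\theta(p_n)/\pi(p_n)$ already recorded in the excerpt. Writing $x=p_n$ and $L=\log x$, I would start from
$$
\log(p_n/s_n) = \log x - \frac{\theta(x)}{\pi(x)} = \left(\log x - \frac{x}{\pi(x)}\right) + \frac{x-\theta(x)}{\pi(x)},
$$
which separates the problem into a purely $\pi$-theoretic main term $\log x - x/\pi(x)$ and a ``$\theta$-correction'' term $(x-\theta(x))/\pi(x)$. First I would dispose of the correction term: by Lemma~\ref{lemma1}, for $x\ge10^8$ it lies in the interval $(-1/L^2,\,1/L^2)$. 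Everything then reduces to expanding $\log x - x/\pi(x)$ and controlling it with \emph{explicit} constants.

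For the main term I would feed in two-sided explicit estimates of the shape
$$
\frac{x}{\log x}\Bigl(1+\frac{1}{L}+\frac{2}{L^2}+\frac{b}{L^3}\Bigr) < \pi(x) < \frac{x}{\log x}\Bigl(1+\frac{1}{L}+\frac{2}{L^2}+\frac{c}{L^3}\Bigr),
$$
taken from Dusart \cite{dusart} with suitable numerical $b,c$ valid on the relevant range. Inverting, each side turns $x/\pi(x)$ into a rational function of $L$; the lower estimate for $\pi$ gives
$$
\log x - \frac{x}{\pi(x)} \ge L - \frac{L^4}{L^3+L^2+2L+b} = \frac{L^3+2L^2+bL}{L^3+L^2+2L+b} = 1 + \frac{1}{L} + \frac{b-3}{L^2} + O\!\left(\frac{1}{L^3}\right),
$$
and symmetrically the upper estimate yields $1+1/L+(c-3)/L^2+O(1/L^3)$ from above. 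The aim is to pin these rational functions between the clean barriers $1+1/L+2.62/L^2$ and $1+1/L+3.83/L^2$. Since each barrier differs from its rational function only at order $1/L^2$, with a strictly positive numerical gap (arising because $b-3$ exceeds $2.62$ and $c-3$ falls below $3.83$), verifying each inequality reduces to clearing denominators and checking that a fixed polynomial in $L$ keeps one sign for all $L\ge\log 32059$, which is elementary.

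Combining the two pieces, for $x=p_n\ge10^8$ I add and subtract the $1/L^2$ slack of Lemma~\ref{lemma1} to the barriers $2.62/L^2$ and $3.83/L^2$, obtaining
$$
1 + \frac{1}{L} + \frac{1.62}{L^2} < \log(p_n/s_n) < 1 + \frac{1}{L} + \frac{4.83}{L^2},
$$
and exponentiating gives \eqref{eq1}. The limit $p_n/s_n\to e$ is then immediate, since these bounds force $\log(p_n/s_n)\to1$. The remaining primes $32059\le p_n<10^8$, where Lemma~\ref{lemma1} does not apply, I would settle by directly computing $\theta(p_n)/\pi(p_n)$ against the two barriers at each such prime.

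I expect the main obstacle to be the explicit, uniform constant-chasing rather than the asymptotics: the expansion $1+1/L+O(1/L^2)$ is routine, but forcing the single clean constants $1.62$ and $4.83$ to hold all the way down to $p_n=32059$ requires both sharp enough $b,c$ in Dusart's $\pi$-estimates and a careful check that the $O(1/L^3)$ remainder never overtakes the $1/L^2$ gap near the threshold. The finite verification over $32059\le p_n<10^8$ is conceptually trivial but is the one step that cannot be carried out by hand.
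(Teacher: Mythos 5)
Your skeleton is the paper's skeleton: the same decomposition of $\log(p_n/s_n)$ into the main term $\log x - x/\pi(x)$ plus a $\theta$-correction term controlled by Lemma~\ref{lemma1}, the same bookkeeping that turns the pair $(2.62,\,3.83)$ into $(1.62,\,4.83)$ by absorbing the $\pm 1/\log^2 x$ slack, the same exponentiation, and the same finite computational check on $32059 \le p_n < 10^8$. The one genuine defect is the source you name for the main-term estimate. Dusart \cite{dusart} contains no two-sided bounds of your posited shape $\frac{x}{\log x}\left(1+\frac{1}{L}+\frac{2}{L^2}+\frac{b}{L^3}\right)$ with coefficients matching through second order: what is actually available there is a lower bound for $\pi(x)$ with second-order constant $2$ and an upper bound with a strictly larger second-order constant (about $2.3$), and no usable third-order refinement on either side. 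Run through your own inversion algebra, these inputs yield only $\log x - x/\pi(x) \ge 1 + \frac{1}{L} - \frac{3}{L^2} + \cdots$ from below and roughly $\log x - x/\pi(x) \le 1 + \frac{1.33}{L}$ from above; both tend to $1$, so the limit $p_n/s_n \to e$ survives, but \eqref{eq1} does not, since $0.33/L > 3.83/L^2$ as soon as $L > 12$ or so, i.e., throughout the range $x \ge 10^8$ where the analytic argument must operate. The paper closes exactly this hole by citing Axler \cite[Corollaries 3.5, 3.6]{axler}, which for $x \ge 10^8$ give
$$
\log x - 1 - {1\over\log x} - {3.83\over\log^2 x} ~<~ {x\over\pi(x)} ~<~
\log x - 1 - {1\over\log x} - {2.62\over\log^2 x},
$$
that is, the estimate already in your ``barrier'' form: no denominator-clearing or polynomial sign check is needed, one simply rearranges. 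So the inversion machinery you describe is not wrong, but with the reference you chose it cannot reach the constants $1.62$ and $4.83$; with Axler substituted for Dusart it becomes superfluous.

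One smaller calibration point: since Lemma~\ref{lemma1} is only valid for $x \ge 10^8$, your proposed sign check ``for all $L \ge \log 32059$'' asks for more than is ever needed --- the analytic part of the proof only has to hold for $L \ge \log 10^8 \approx 18.4$, because the interval $32059 \le p_n < 10^8$ is settled by direct computation in any case, exactly as you propose and as the paper does.
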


\begin{proof}
Let $x\ge 10^8$. From Axler \cite[Corollaries 3.5, 3.6]{axler} we have 
$$
\log x - 1 - {1\over\log x} - {3.83\over\log^2 x} ~<~ {x\over\pi(x)} ~<~ 
\log x - 1 - {1\over\log x} - {2.62\over\log^2 x}.
$$
Therefore,
\begin{equation}\label{eq2}
1 + {1\over\log x} + {2.62\over\log^2 x} ~<~ \log x - {x\over\pi(x)} ~<~ 
1 + {1\over\log x} + {3.83\over\log^2 x},
\end{equation}
while
\begin{equation}\label{eq3}
     \log x - {x\over\pi(x)} - {|\theta(x)-x|\over\pi(x)}
 ~<~ \log x - {\theta(x)\over\pi(x)}
 ~<~ \log x - {x\over\pi(x)} + {|\theta(x)-x|\over\pi(x)}.
\end{equation}
Combining (\ref{eq2}) and (\ref{eq3}) with the bound ${|\theta(x)-x|\over\pi(x)}<{1\over\log^2 x}$ 
(Lemma \ref{lemma1}), for $x\ge 10^8$ we get
\begin{equation}\label{eq4}
1 + {1\over\log x} + {1.62\over\log^2 x} ~<~ \log x - {\theta(x)\over\pi(x)} ~<~ 
1 + {1\over\log x} + {4.83\over\log^2 x}.
\end{equation}
But $\log(p_n/s_n)=\log p_n - \theta(p_n)/\pi(p_n)$, so setting in (\ref{eq4})
$x = p_n > 10^8$ we find
\begin{equation}\label{eq5}
1+{1\over\log p_n} + {1.62\over\log^2 p_n} ~<~ \log(p_n/s_n) ~<~ 
1+{1\over\log p_n} + {4.83\over\log^2 p_n}. 
\end{equation}
Exponentiating (\ref{eq5}) we prove the theorem for $p_n>10^8$. Separately, 
we verify by computation that (\ref{eq1}) is true for $32059 \le p_n < 10^8$ as well.
\end{proof}

\noindent
{\bf Remarks.}

\noindent
(i) The convergence $p_n/s_n\to e$ is slow (see Table 1). The better approximation 
\begin{equation}\label{eq6}
p_n/s_n ~\approx~ \exp\left(1+{1\over\log p_n} + {3\over\log^2 p_n}\right)
\end{equation}
has a relative error well below 1\% for $p_n>10^6$, even while $p_n/s_n$ is still far from $e$.

\smallskip\noindent
(ii) One can construct approximations with more\footnote{
The number of terms is meant to be finite, while $p_n$ should be large enough; 
otherwise, such approximations would actually be worse than those with fewer terms.
When $p_n$ is small, even approximation (\ref{eq6}) itself is worse than $p_n/s_n\approx\exp(1+{1\over\log p_n})$ or 
$p_n/s_n\approx e$ \ (see, e.\,g., the first line in Table 1, $p_n=11$).} 
terms:
$$
p_n/s_n ~\approx~ \exp\left(1+{1\over\log p_n} + {3\over\log^2 p_n} + {13\over\log^3 p_n} + \ldots\right),
$$
where the coefficients 1, 3, 13, $\ldots$ are terms of OEIS sequence 
\seqnum{A233824}: a recurrent sequence in Panaitopol's formula for $\pi(x)$
\cite{panaitopol}.
A rigorous proof of such approximations, akin to Theorem \ref{th2}, would depend on sharper bounds for 
${x\over\pi(x)}$ and ${|\theta(x)-x|\over\pi(x)}$,
and these sharper bounds may in turn depend, e.\,g., on the truth of the Riemann Hypothesis.

\begin{center} 
Table 1: \ Values of $n$, $p_n$, $s_n =(p_n\#)^{1/n}$, $p_n/s_n$ 
and approximation (\ref{eq6}) for $p_n\approx10^k$ \\[0.5em]
\begin{tabular}{rrrrcc}
\hline
      $n$ &  $p_n$ & $s_n$ \phantom{\large$11^{1^1}$}   & &
  $p_n/s_n$ & $\exp(1+{1\over\log p_n} + {3\over\log^2 p_n})$
\\
[0.8ex]\hline
\vphantom{\fbox{$1^1$}}
       5 &        11 &         4.706764 & & 2.337062 & 6.950270 \\
      26 &       101 &        29.899069 & & 3.378032 & 3.886576 \\
     169 &      1009 &       298.623420 & & 3.378837 & 3.344393 \\
    1230 &     10007 &      3143.242209 & & 3.183655 & 3.139064 \\
    9593 &    100003 &     32619.709536 & & 3.065723 & 3.032817 \\
   78499 &   1000003 &    334329.282286 & & 2.991072 & 2.968628 \\
  664580 &  10000019 &   3401979.209240 & & 2.939471 & 2.925864 \\
 5761456 & 100000007 &  34435454.560637 & & 2.903984 & 2.895414 \\
50847535 &1000000007 & 347413774.453987 & & 2.878412 & 2.872666 \\
\hline
\end{tabular}
\end{center}

\pagebreak

\noindent(iii)
Bounds (\ref{eq1}) strengthen the double inequality of S\'andor \cite{sandor} 
$$
e ~<~ p_n/s_n ~<~ {p_n \over p_{n-1}} \cdot p_{n+1}^{\pi(n)/n} \qquad\mbox{ for } n\ge10.
$$

\section{Acknowledgments}
I am grateful to all contributors and editors of the websites {\it OEIS.org} and {\it PrimePuzzles.net}, 
particularly to Anton Vrba who conjectured the limit of $p_n/s_n$ \cite{rivera}. 
Thanks also to Christian Axler and Pierre Dusart for proving the $\pi(x)$ and $\theta(x)$ bounds
used in the main theorem.

{\small

\bigskip
\hrule
\bigskip

\noindent 2010 {\it Mathematics Subject Classification}: 11A25, 11N05, 11N37.

\noindent \emph{Keywords: } 
asymptotic formulas, geometric mean, primes, primorial.

\bigskip
\hrule
\bigskip

\noindent (Concerned with sequences
 \seqnum{A002110},
 \seqnum{A062049},
 \seqnum{A233824}.)

\bigskip
\hrule
\bigskip
}
\end{document}